\title{Complex varieties with infinite Chow groups modulo 2}
\author{Burt Totaro}
\date{  }
\def\Z{\text{\bf Z}}
\def\Q{\text{\bf Q}}
\def\R{\text{\bf R}}
\def\C{\text{\bf C}}
\def\P{\text{\bf P}}
\def\F{\text{\bf F}}
\def\arrow{\rightarrow}
\def\invlim{\varprojlim}
\def\Gal{\text{Gal}}
\def\L{\overline{L}}
\def\QQ{\overline{\Q}}
\def\FF{\overline{F}}
\def\Spec{\text{Spec}}
\def\im{\text{im}}
\def\E{\overline{E}}
\begin{document}
\maketitle
\newtheorem{theorem}{Theorem}[section]
\newtheorem{corollary}[theorem]{Corollary}
\newtheorem{lemma}[theorem]{Lemma}

\theoremstyle{definition}
\newtheorem{definition}[theorem]{Definition}
\newtheorem{example}[theorem]{Example}

\theoremstyle{remark}
\newtheorem{remark}[theorem]{Remark}

Schoen gave the first examples of smooth complex projective
varieties $X$ and prime numbers $l$ for which the Chow group
of algebraic cycles modulo $l$ is infinite
\cite{Schoen}. In particular, he showed
that this occurs for all prime numbers $l$
with $l\equiv 1\pmod{3}$, with $X$ the product of three copies
of the Fermat cubic curve $x^3+y^3+z^3=0$. This is a fundamental
example, showing how far
motivic cohomology with finite coefficients can be from
etale cohomology, which is finite in this situation.
Nonetheless, the restriction on $l$
was frustrating.

Rosenschon and Srinivas then showed that for a very general principally
polarized complex abelian 3-fold $X$, the Chow group $CH^2(X)/l$ is infinite
for all prime numbers $l$ at least some (unknown) constant $l_0$
\cite{RS}.

In this paper, 
we show that for a very general principally
polarized complex abelian 3-fold $X$, the Chow group $CH^2(X)/l$ is infinite
for {\it all }prime numbers $l$ (Theorem \ref{main}).
In particular, these are the first examples
of smooth complex projective varieties with infinite mod 2 Chow groups.
The prime 2 seemed inaccessible for earlier arguments. The mod 2 result
also implies that the Witt group $W(X)$ of quadratic bundles
is infinite \cite{Parimala}, \cite[Theorem 1.4]{TotaroWitt}.
Again, these are the first complex varieties known to have infinite
Witt group.

The method is flexible,
and much of it should apply to other classes of varieties.
The infiniteness of $CH^2(X)/l$ arises from pulling back
Ceresa cycles, as discussed in section \ref{moduli},
by infinitely many different isogenies. A striking feature of the argument
is that the analysis of Chow groups modulo $l$ for a complex variety $X$
involves the reduction of $X$ to characteristic $l$.

Using products $X\times \P^{n-3}$ for any $n\geq 3$, we have similar
examples in higher dimensions:

\begin{corollary}
For each $n\geq 3$, there is a smooth complex projective $n$-fold $X$
such that $CH^i(X)/l$ is infinite for all $2\leq i\leq n-1$ and all
prime numbers $l$.
\end{corollary}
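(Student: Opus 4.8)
The plan is to deduce the corollary from Theorem \ref{main} by taking a product with projective space. Let $A$ be a very general principally polarized complex abelian $3$-fold, so that $CH^2(A)/l$ is infinite for every prime number $l$ by Theorem \ref{main}, and set $X=A\times\P^{n-3}$. Then $X$ is a smooth complex projective variety of dimension $3+(n-3)=n$, so everything reduces to computing $CH^i(X)/l$.

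The key input is the projective bundle formula. For a smooth quasi-projective variety $A$ over a field and any $m\geq 0$, the maps $CH^{i-j}(A)\to CH^i(A\times\P^m)$ sending $\alpha\mapsto (p_1^*\alpha)\cdot h^j$, where $h\in CH^1(\P^m)$ is the hyperplane class, assemble into an isomorphism of abelian groups
$$\bigoplus_{j=0}^{m} CH^{i-j}(A)\xrightarrow{\ \sim\ } CH^i(A\times\P^m)$$
for every $i$ (Fulton, \emph{Intersection Theory}, Theorem 3.3). Since $(-)\otimes_{\Z}\Z/l$ preserves finite direct sums, applying it yields an isomorphism
$$CH^i(A\times\P^m)/l\;\cong\;\bigoplus_{j=0}^{m}\bigl(CH^{i-j}(A)/l\bigr).$$

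Now take $m=n-3$. For each $i$ in the range $2\leq i\leq n-1$, the index $j=i-2$ lies in $\{0,1,\dots,n-3\}$, so $CH^2(A)/l$ occurs as a direct summand of $CH^i(X)/l$ in the decomposition above. By Theorem \ref{main} this summand is infinite for every prime $l$, hence $CH^i(X)/l$ is infinite for all such $i$ and all $l$. I do not expect any genuine obstacle here: the entire difficulty is contained in Theorem \ref{main}, and the corollary is a formal consequence of it together with the projective bundle formula. The only points to keep straight are the dimension count for $X$ and the verification that $j=i-2$ is an admissible power of $h$ precisely when $2\leq i\leq n-1$.
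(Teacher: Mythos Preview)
Your argument is correct and matches the paper's approach exactly: the paper indicates just before the corollary that one should take products $A\times\P^{n-3}$, and you have filled in the details via the projective bundle formula and the index check $0\leq i-2\leq n-3$.
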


By taking the product with a very general elliptic curve, we get varieties
for which the subgroup of Chow groups killed by $l$ is infinite.
This uses Schoen's theorem on exterior product maps on Chow
groups \cite[Theorem 0.2]{Schoenproduct}.

\begin{corollary}
For each $n\geq 4$, there is a smooth complex projective $n$-fold $X$
such that $CH^i(X)[l]$ is infinite for all $3\leq i\leq n-1$
and all prime numbers $l$.
\end{corollary}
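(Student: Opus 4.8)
The plan is to take $X=Y\times E$, where $Y$ is the smooth complex projective $(n-1)$-fold produced by the preceding corollary --- so that $CH^a(Y)/l$ is infinite for all $2\leq a\leq n-2$ and all primes $l$ --- and $E$ is a very general elliptic curve over $\C$ (say $j(E)$ transcendental over the field of definition of $Y$); then $X$ is a smooth complex projective $n$-fold. The $l$-torsion in $CH^i(X)$ will be produced by external product with an $l$-torsion divisor class on $E$, the infinitude coming from Schoen's theorem on exterior product maps.

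Fix a prime $l$ and an integer $i$ with $3\leq i\leq n-1$, and set $a=i-1$, so $2\leq a\leq n-2$. Choose $P\in E(\C)$ of exact order $l$ and put $\gamma:=[P]-[O]\in CH^1(E)$, so that $l\gamma=0$. The external product $\alpha\mapsto p_1^*\alpha\cdot p_2^*\gamma$ is a homomorphism $CH^a(Y)\to CH^{a+1}(X)$, and since $l\,(p_1^*\alpha\cdot p_2^*\gamma)=p_1^*\alpha\cdot p_2^*(l\gamma)=0$ it factors through $CH^a(Y)/l$ and takes values in $CH^{a+1}(X)[l]$, giving a homomorphism
\[
\varepsilon_\gamma\colon CH^a(Y)/l\longrightarrow CH^{a+1}(X)[l]=CH^i(X)[l].
\]
So it is enough to show $\varepsilon_\gamma$ has infinite image. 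First I would pass to homologically trivial cycles: the image of the cycle class map $CH^a(Y)\to H^{2a}(Y,\Z)$ is finitely generated, so the image of $CH^a(Y)_{\mathrm{hom}}/l$ in $CH^a(Y)/l$ has finite index and hence is infinite; in particular $CH^a(Y)_{\mathrm{hom}}/l$ is infinite. The composite $CH^a(Y)_{\mathrm{hom}}/l\to CH^a(Y)/l\xrightarrow{\varepsilon_\gamma}CH^{a+1}(X)$ factors as
\[
CH^a(Y)_{\mathrm{hom}}/l\longrightarrow CH^a(Y)_{\mathrm{hom}}\otimes_{\Z}CH^1(E)_{\mathrm{hom}}\longrightarrow CH^{a+1}(X),
\]
where the first map sends $\overline{\alpha}$ to $\alpha\otimes\gamma$ (using $CH^1(E)_{\mathrm{hom}}=\mathrm{Pic}^0(E)$) and the second is the exterior product. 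By \cite[Theorem 0.2]{Schoenproduct}, for $E$ very general this exterior product map is injective; it then remains to deduce that the composite $CH^a(Y)_{\mathrm{hom}}/l\to CH^{a+1}(X)$ is injective --- equivalently, that $\varepsilon_\gamma$ has infinite image --- which yields $CH^i(X)[l]$ infinite. The single pair $(Y,E)$ works for all $3\leq i\leq n-1$ and all primes $l$ simultaneously: $Y$ does by the preceding corollary, and ``$E$ very general'' imposes only countably many conditions on $j(E)$, one for each pair $(i,l)$.

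The hard part is this last deduction, that is, the precise use of Schoen's theorem: one must match the form of Theorem 0.2 to the present situation and control the $\mathrm{Tor}$-terms that appear when passing from the injectivity of $CH^a(Y)_{\mathrm{hom}}\otimes_{\Z}\mathrm{Pic}^0(E)\hookrightarrow CH^{a+1}(X)$ to a statement about $CH^a(Y)_{\mathrm{hom}}/l\cong CH^a(Y)_{\mathrm{hom}}\otimes_{\Z}\Z/l$ and its image under external product with the $l$-torsion class $\gamma$. Everything else --- the construction of $X$, the factorization of $\varepsilon_\gamma$ through $CH^a(Y)/l$ and into $l$-torsion, and the counting argument --- is routine.
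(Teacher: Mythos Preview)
Your construction and overall strategy match the paper's exactly: form $X=Y\times E$ with $Y$ the $(n-1)$-fold from the previous corollary and $E$ a very general elliptic curve, then use the exterior product with an $l$-torsion divisor on $E$ and invoke Schoen's Theorem~0.2. The paper says nothing more than this.

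There is, however, a genuine gap in the way you have set things up. You state Schoen's theorem as injectivity of
\[
CH^a(Y)_{\hom}\otimes_{\Z}\mathrm{Pic}^0(E)\longrightarrow CH^{a+1}(Y\times E),
\]
and then hope to deduce injectivity of $\varepsilon_\gamma$ on $CH^a(Y)_{\hom}/l$. As you suspect, this does \emph{not} follow formally: since $\mathrm{Pic}^0(E)$ is divisible you can write $\gamma=l\delta=l^2\delta'=\cdots$, so $\alpha\otimes\gamma=l^n\alpha\otimes\delta^{(n)}$ for every $n$, and hence $\alpha\otimes\gamma=0$ whenever $\alpha\in CH^a(Y)_{\hom}[l^\infty]$. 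Thus the kernel of your map $CH^a(Y)_{\hom}/l\to CH^a(Y)_{\hom}\otimes\mathrm{Pic}^0(E)$ is exactly the image of $CH^a(Y)_{\hom}[l^\infty]$, which need not vanish.

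Two remedies. First, Schoen's Theorem~0.2 is in fact stated so as to give the injection of $CH^a(Y)/n$ directly (exterior product with a nonzero $n$-torsion class on $E$ is injective on $CH^a/n$); with the correct statement there are no Tor terms to control, and the paper's one-line citation is justified. Second, even from the weaker form you wrote down, the gap closes in this particular case: for $Y=A\times\P^{n-4}$ one has $CH^a(Y)[l^\infty]=\bigoplus_j CH^{a-j}(A)[l^\infty]$, and each summand has finite corank over $\Z_l$ (trivially for $CH^0$; via $\mathrm{Pic}^0$ for $CH^1$; by Merkurjev--Suslin for $CH^2$; by Roitman for $CH^3$). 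A group of finite $\Z_l$-corank has finite image in $CH^a(Y)/l$, so the kernel you were worried about is finite and $\varepsilon_\gamma$ still has infinite image. Either way, once you pin down the correct form of Schoen's theorem the ``hard part'' evaporates.
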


The bounds in these corollaries are optimal. In particular,
for any smooth complex projective $n$-fold $X$ and any prime
number $l$, the group $CH^i(X)/l$
is finite if $i$ is 0, 1, or $n$, and the $l$-torsion subgroup $CH^i(X)[l]$
is finite if $i$ is 0, 1, 2, or $n$. The harder cases
are finiteness of $CH^n(X)[l]$, by Roitman's theorem
\cite[Theorem 5.1]{Blochbook}, and finiteness
of $CH^2(X)[l]$, by the Merkurjev-Suslin theorem \cite[section 18.4]{MS}.

This work was supported by The Ambrose Monell
Foundation and Friends, via the Institute for Advanced Study,
and by NSF grant DMS-1303105.

\section{Moduli spaces}
\label{moduli}

A property holds for {\it very general }complex points
of a complex variety $S$ if it holds for all points outside
a countable union of lower-dimensional closed subvarieties of $S$.
In particular, we can talk about properties of a very general variety
in an irreducible family of varieties.

For a curve $C$ of genus $\geq 2$ with a rational
point $p$ over a field $k$, the {\it Ceresa cycle }is the 1-cycle
on the Jacobian $J(C)$ given by $C-C^{-}$. Here $C$ is embedded
in $J(C)$ with $p$ mapping to $0$, and $C^{-}$ denotes the image
of that curve by the automorphism $x\mapsto -x$ of the Jacobian.
The Ceresa cycle is homologically trivial, and Ceresa showed
that it is not algebraically equivalent to zero for a very general complex
curve $C$ of genus at least 3 \cite{Ceresa}.
The choice of point $p$ is irrelevant if
we only consider the Ceresa cycle modulo algebraic equivalence. Likewise,
for a curve $C$
over an algebraically closed field and a positive integer $m$,
the choice of $p$ does not affect the Ceresa cycle
in $CH_1(J(C))/m$, since the group of cycles algebraically equivalent
to zero is divisible.

For a positive integer $N$,
let $\zeta_N$ be a fixed primitive $N$th root of unity.
Define a (full) {\it level N structure }on
a principally polarized abelian variety $A$ of dimension $g$ to be a basis
$\{u_1,\ldots,u_g,v_1\ldots,v_g\}$ of the subgroup of $A$ killed by $N$
such that, with respect
to the Weil pairing $A[N]\times A[N]\arrow \mu_N$,
we have $\langle u_i,v_i\rangle=\zeta_N$ for all $i$,
$0=\langle u_i,u_j\rangle=\langle v_i,v_j\rangle$
for all $i$ and $j$, and $\langle u_i,v_j\rangle=0$ if $i\neq j$.

Fix a prime number $l$.
Let $N$ be a prime number at least 3 and different from $l$.
Let $X(N)$ be the moduli space of principally polarized abelian
varieties of dimension 3 with a full level $N$ structure with respect
to $\zeta_N$. Then $X(N)$ is a smooth quasi-projective
integral scheme over $\Z[1/N,\zeta_N]$. 

Let $L$ be the function field over $\Q(\zeta_N)$ of the moduli space $X(N)$,
and let $A$ be the natural abelian variety
over $L$. The main theorem will be that $CH^2(A_{\L})/l$ is infinite.
(We need $N\geq 3$ for $L$ and $A$ to make sense, because the moduli
stack ${\cal X}(N)$ has nontrivial generic stabilizer when $N$ is 1 or 2.
Also, note that the algebraic closure $\L$ and the abelian variety $A_{\L}$
are actually independent of the choice of $N$, up to isomorphism.)

By Lecomte and Suslin, for any variety $X$ over an algebraically closed field
$F$ and any algebraically closed extension field $E$ of $F$,
the natural map $CH^2(X)/m\arrow CH^2(X_E)/m$
is an isomorphism \cite{Lecomte, SuslinICM}. As a result,
showing that $CH^2(A_{\L})/l$ is infinite will imply that
$CH^2(A)/l$ is infinite for a very general principally polarized
complex abelian 3-fold $A$.

Let $M=M(N)$ be the moduli space
of curves of genus 3 with a full level $N$ structure on the Jacobian.
Then $M$ is a smooth quasi-projective integral scheme over
$\Z[1/N,\zeta_N]$. The convenient feature of abelian 3-folds for us 
is that the Torelli map $M(N)\arrow X(N)$ is dominant, of degree 2.
(This uses that $N\geq 3$. For $N$ equal to 1 or 2, the moduli
stack ${\cal X}(N)$ has generic stabilizer group of order 2, and the map
$M(N)\arrow X(N)$ of coarse moduli spaces has degree 1.)
That is, most principally polarized
abelian 3-folds $A$ over an algebraically closed field
are Jacobians; but, given a general abelian variety $A$ and the curve $C$,
the isomorphism $J(C)\cong A$ is only determined
up to sign.

Let $E$ be the function field of $M(N)$. For any finite extension
field $E_1$ of $E$ such that the universal curve $C$ over $E$ has an
$E_1$-rational point $p$, we can define the Ceresa cycle
$y\in CH^2(J(C)_{E_1})$. We are usually concerned only with
the class of $y$ in $CH^2(J(C)_{\E})/l^m$ for a natural number $m$;
that class is independent of the choice of $E_1$ and $p$,
since two different Ceresa cycles are algebraically equivalent.
In fact, the same argument shows that $y$ is fixed by the
action of the Galois group $\Gal(\E/E)$ on $CH^2(J(C)_{\E})/l^m$.

Since $E$ is a quadratic extension of $L$, the function field
of $X(N)$, we can view $y$ as a class in $CH^2(A_{\L})/l^m$
for any $m$.
But it is well-defined only up to sign, because of the choice
of isomorphism $J(C)\cong A$. As a result, $\Gal(\L/L)$
acts on $y$ by $gy=y$ if $y$ is in the index-2 subgroup $\Gal(\L/E)$,
and by $gy=-y$ otherwise.

\section{The Ceresa cycle modulo any prime number}

\begin{lemma}
\label{nonzero}
Let $l$ be a prime number, and let $N$ be a prime number
at least 3 and different from $l$.
Let $E$ be the function field over $\Q(\zeta_N)$ of the moduli space
of curves of genus 3 with level $N$ structure,
and let $C$ be the universal curve over $E$.
Let $\L$ be an algebraic closure
of $E$, and let
$y$ be the Ceresa cycle in $CH^2(J(C)_{\L})$ associated
to an $\L$-point of $C$. Then there is a positive integer $c$ such that
$2y$ is not zero in $CH^2(J(C)_{\L})/l^c$.
\end{lemma}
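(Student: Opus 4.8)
The plan is to reduce the non-vanishing of the Ceresa cycle modulo $l^c$ to the corresponding statement in characteristic $l$, and to prove the latter by a cycle class map into a characteristic-$l$ cohomology theory together with an infinitesimal-invariant (Griffiths transversality) computation. The feature that makes the reduction possible is that the moduli scheme $M=M(N)$, with its universal curve $C$ and Jacobian $J(C)$, is smooth --- hence has good reduction --- over $\Z[1/N,\zeta_N]$ at the primes dividing $l$; this is where the hypothesis $l\neq N$ enters.

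Suppose, for a contradiction, that $2y=0$ in $CH^2(J(C)_{\L})/l^c$ for every positive integer $c$. Let $R$ be the local ring of $M$ at the generic point of its reduction modulo a prime over $l$: a discrete valuation ring with fraction field of characteristic $0$ and residue field a function field $L'$ of characteristic $l$. Over $R$ the universal curve and its Jacobian remain smooth and proper, so there is a specialization homomorphism $CH^2(J(C)_{E})/l^c\to CH^2(J(C)_{L'})/l^c$ --- defined by taking flat closures of cycles and of rational equivalences over $R$ --- and, passing to algebraic closures along compatibly chosen places, a specialization homomorphism $CH^2(J(C)_{\L})/l^c\to CH^2(J(C)_{\overline{L'}})/l^c$. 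Since the Ceresa cycle over $E$ and over $L'$ both arise by restriction from the relative Ceresa cycle on the Jacobian of the universal curve over an integral model of $M$, this map sends $y$ to $y$; hence $2y=0$ in $CH^2(J(C)_{\overline{L'}})/l^c$ for all $c$. Therefore it suffices, to reach a contradiction, to show that for the universal genus-$3$ curve over the characteristic-$l$ field $L'$ there is a positive integer $c$ with $2y\neq 0$ in $CH^2(J(C)_{\overline{L'}})/l^c$.

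The characteristic-$l$ non-vanishing is the crux and the main obstacle. In characteristic $l$ the mod-$l^c$ \'etale cycle class map is not available (there is no \'etale Tate twist $\mu_{l^c}^{\otimes 2}$), so I would use a cycle class map valued in logarithmic de Rham--Witt, crystalline, or syntomic cohomology, together with its Abel--Jacobi-type refinement for homologically trivial cycles. To show this refinement does not annihilate (a multiple of) the Ceresa cycle of the universal curve over $L'$, I would compute its infinitesimal invariant: differentiate the relative Ceresa cycle along the reduction of $M$ modulo $l$, using the Gauss--Manin connection on relative crystalline cohomology and Griffiths transversality, and show the resulting class is nonzero. This is the precise characteristic-$l$ analogue of Ceresa's argument over $\C$, where the normal function attached to $C-C^{-}$ over the moduli space is shown to have nonzero infinitesimal Abel--Jacobi invariant. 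Setting up the characteristic-$l$ formalism for this and carrying out the computation is where the real work lies, and the case $l=2$ is the delicate one --- which is why earlier approaches, based on $l$-adic Abel--Jacobi maps that require $l$ to be large, could not reach it. The factor of $2$ in the statement is bookkeeping: for an abelian $3$-fold $C-C^{-}$ equals twice its primitive component in Beauville's eigenspace decomposition of $CH^2$ (the higher odd components vanishing when $g=3$), and it is essentially that component, or a pairing built from it, that the cycle class map detects; so a non-vanishing statement for the primitive component modulo $l^{c'}$ yields one for $2y$ modulo $l^c$ with $c$ only slightly larger.
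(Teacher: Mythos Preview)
Your proposal has a genuine gap: the step you identify as ``the crux and the main obstacle'' --- proving non-vanishing of the Ceresa class modulo $l^c$ over the characteristic-$l$ function field via a crystalline/syntomic Abel--Jacobi map and an infinitesimal-invariant computation --- is not carried out. You say you ``would compute'' it and that ``setting up the characteristic-$l$ formalism \ldots\ is where the real work lies''; that is an honest assessment, but it means what you have written is a research plan, not a proof. In particular you have not specified a cycle-class or Abel--Jacobi map in characteristic $l$ that takes values in $l^c$-torsion groups (so that it factors through $CH^2/l^c$), nor have you indicated how the transversality computation would actually go for $l=2$ or $3$.

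The paper's argument is organized quite differently, and avoids exactly the difficulty you flag. It works entirely over the characteristic-zero field $\overline{L}$, using the $l$-adic Abel--Jacobi map $CH^2_{\hom}\to H^1(E,H^3(A_{\overline{L}},\Z_l(2)))$ into continuous Galois cohomology. The two key inputs are separated:
\begin{itemize}
\item \emph{Non-triviality of the Abel--Jacobi image.} This comes from Hain's computation of the normal function of the Ceresa cycle over $\C$: it gives a nonzero (in fact bijective) map from the abelianized Torelli group tensored with $\Q$ to $PH^3(A_{\C},\Q)$. No characteristic-$l$ geometry is used here, and the prime $l$ plays no role.
\item \emph{Control of $l$-divisibility.} Reduction to characteristic $l$ enters only through the theorem of Bloch--Esnault (an application of $p$-adic Hodge theory for good ordinary reduction), which shows $N^1H^3(A_{\overline{L}},\F_l)\neq H^3(A_{\overline{L}},\F_l)$. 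Combined with irreducibility of the $\Gal(\overline{L}/L)$-representation on $PH^3(A_{\overline{L}},\Q_l)$, this pins down $\big(\varprojlim_r N^1H^3(A_{\overline{L}},\Z/l^r)\big)\otimes\Q_l$ as exactly $\Theta\cdot H^1$. A Lefschetz-type correspondence $P$ killing $\Theta\cdot H^1$, together with Merkurjev--Suslin's identification of $CH^2[l^\infty]$ with $N^1H^3(\Q_l/\Z_l)$, then shows $P_*(CH^2(A_{\overline{L}})[l^\infty])$ is killed by a fixed power $l^a$.
\end{itemize}
A weight argument in Galois cohomology (the paper's Lemma on invariants under finite extensions) glues these: if $z=2y$ were divisible by $l^{a+b}$ in $CH^2(A_{\overline{L}})$, one manufactures a class in $H^1(E,P_*H^3(\Z_l))$ whose $l^b$-th multiple equals $P_*z$, contradicting the nonvanishing of $P_*z$ modulo $l^b$ coming from Hain. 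So the Ceresa cycle is never specialized to characteristic $l$; only the coniveau filtration on \'etale cohomology is controlled via the special fiber.

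Finally, your reading of the factor $2$ through Beauville's decomposition is not what is happening: the paper takes $z=2y$ because the identification $J(C)\cong A$ over $E$ is ambiguous up to sign, so $y$ is only well defined up to sign in $CH^2(A_{\overline{L}})/l^c$; doubling removes that ambiguity and makes the Galois-descent arguments clean.
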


\begin{proof}
The first step is the following result
of Bloch and Esnault \cite[section 1]{BE},
an application of Bloch-Kato's work on $p$-adic Hodge theory.
For a variety $X$ over an algebraically closed field,
the {\it coniveau }filtration on etale cohomology
is defined by: an element $x$ of $H^*(X,\Z/a)$ is in
$N^rH^*(X,\Z/a)$ if there is a closed subset $Y$ of codimension
at least $r$ in $X$ such that $x$ restricts to zero
in $H^*(X-Y,\Z/a)$.

\begin{theorem}
\label{be}
Let $K$ be a field with a discrete valuation $v$, and let $k$
be the residue field. Assume that $K$ has characteristic zero and $k$
is perfect of characteristic $l>0$. Let $X$ be a smooth projective
variety over $K$ with good ordinary reduction at $v$, and let
$Y$ be the special fiber over $k$. Assume either that the crystalline
cohomology of $Y$ has no torsion or that
$$\dim(X)<(l-1)/\gcd(e,l-1),$$
where $e$ is the absolute ramification degree of $K$ (meaning
that $v(K^*)=\Z \cdot (v(l)/e)$).
Finally, let $m$ be a natural number such that $H^0(Y,\Omega^m)\neq 0$.

Then $N^1H^m(X_{\overline{K}},\F_l)\neq H^m(X_{\overline{K}},\F_l)$.
Equivalently, writing $\overline{K}(X)$ for the function field,
the natural map
$$H^m(X_{\overline{K}},\F_l)\arrow H^m(\overline{K}(X),\F_l)$$
is not zero.
\end{theorem}

We will apply Theorem \ref{be} to an abelian variety $X$ with good
ordinary reduction. In this case, the special fiber $Y$
is an ordinary abelian variety over $k$. Every abelian variety
over a perfect field of characteristic $l>0$ has torsion-free crystalline
cohomology \cite[section 7.1]{Illusie}. So Bloch-Esnault's result
applies for all prime numbers $l$ in this case.

Fix a prime number $l$ and a prime number $N\geq 3$ different from $l$.
As in section \ref{moduli}, let $L$ be the function field of the moduli
space $X(N)$ of principally
polarized abelian 3-folds with a level $N$ structure.
Let $A$ be the natural
abelian 3-fold over $L$. (Much of what follows works under some
conditions for other
abelian 3-folds over fields of characteristic zero.)

Let $\Theta \in H^2(A_{\overline{L}},\Q_l(1))$ 
be the given polarization.
The {\it primitive part }$PH^3(A_{\overline{L}},\Q_l(2))$
is the kernel of multiplication by $\Theta$. The hard Lefschetz theorem
over $\C$, translated to etale cohomology,
gives a direct-sum decomposition
$$H^3(A_{\overline{L}},\Q_l(2))=PH^3(A_{\overline{L}},\Q_l(2))
\oplus \Theta\cdot H^1(A_{\overline{L}},\Q_l(1))$$
\cite[p.~122]{GH}.

Let $v$ be the discrete valuation on $L$ whose residue field is the moduli
space of principally polarized abelian 3-folds over $\F_l(\zeta_N)$
with level $N$ structure. Since the generic abelian 3-fold in characteristic
$l$ is ordinary, $A$ has good ordinary reduction
at $v$. By Theorem \ref{be},
$N^1H^3(A_{\overline{L}},\Z/l(2))$ is a proper subgroup of
$H^3(A_{\overline{L}},\Z/l(2))$. (The Tate twist
$\Z/l(2)=(\mu_l)^{\otimes 2}$
makes no difference
to the statement,
since we are considering etale cohomology over an algebraically
closed field.)
It follows that $N^1H^3(A_{\overline{L}},\Z/l^r(2))/l
\arrow H^3(A_{\L},\Z/l^r(2))/l$ is not surjective for any
positive integer $r$.
So the injection
$$B:=(\invlim_r N^1H^3(A_{\L},\Z/l^r(2)))\otimes_{\Z_l}\Q_l
\arrow H^3(A_{\L},\Q_l(2))$$
is not surjective.
(Note that the subspace $B$ may a priori be bigger
than $N^1H^3(A_{\L},\Q_l(2))$. That actually happens
in some examples over $\overline{\F_p}$,
by Schoen \cite[after Theorem 0.4]{Schoenladic}. It would be relatively easy
to prove an upper bound for $N^1H^3(A_{\L},\Q_l(2))$, but we need
Bloch-Esnault's argument in order to prove an upper bound for $B$.)

The Galois group $\Gal(\L/L)$ acts on
$H^3(A_{\L},\Q_l(2))$, preserving the primitive subspace
$PH^3$. The Galois group $\Gal(\L/L\QQ)\subset \Gal(\L/L)$
maps onto a completion of the congruence
subgroup $\Gamma(N)$ of $Sp(6,\Z)$, which acts on $H^1(A,\Q_l)\cong
(\Q_l)^6$ as the standard representation $V$ of
the symplectic group. Since $\Gamma(N)$ is Zariski dense in $Sp(6,\Q_l)$
and $PH^3(A_{\L},\Q_l(2))$ is the irreducible representation $\Lambda^3(V)/V$
of $Sp(6,\Q_l)$, the representation of $\Gal(\L/L)$
on $PH^3(A_{\L},\Q_l(2))$ is irreducible. The Galois group
also preserves the subspace $B$ in the previous paragraph,
and it is clear that $B$ contains the subspace $\Theta\cdot H^1$
(since classes in $\Theta\cdot H^1$ are supported on a theta divisor
in $A_{\L}$). The irreducibility together with the previous
paragraph's result implies that $B$ is {\it equal }to $\Theta\cdot H^1$.

It follows that the inverse limit 
$\invlim_r N^1H^3(A_{\L},\Z/l^r(2))$, a finitely generated
$\Z_l$-submodule of $H^3(A_{\L},\Z_l(2))$, contains
$\Theta\cdot H^1(A_{\L},\Z_l(1))$ as a subgroup of finite index.
So there is an $m\geq 0$ such that for all $r\geq 0$,
$l^m N^1H^3(A_{\L},\Z/l^r(2))$ is contained in $\Theta\cdot
H^1(A_{\L},\Z/l^r(2))$. (In the case at hand (with $\Theta$
a principal polarization), we could take $m=0$, but we choose to state
the argument in a way that would work more generally.)

Let $P\in CH^3(A\times A)$ be a correspondence (with integer
coefficients) such that the action
of $P$ on $H^3(A_{\overline{L}},\Q_l(2))$ sends $\Theta\cdot H^1$
to zero and maps the primitive part $PH^3(A_{\overline{L}},\Q_l(2))$
to itself by an isomorphism. The existence of such a correspondence
is part of the Lefschetz standard conjecture, which is a theorem
for abelian varieties \cite{Kleiman}. (In fact, $P$ can be defined
explicitly as a polynomial in divisor classes
on $A\times A$ \cite[Remark 5.11]{Milne}.) By the previous paragraph,
there is an $a\geq 0$ such that $P_*N^1H^3(A_{\L},\Z/l^r(2))$
is killed by $l^a$ for all $r\geq 0$. 

The Merkurjev-Suslin theorem implies
that that all smooth projective varieties $X$ over $\L$,
Bloch's cycle class map
$$CH^2(X_{\L})[l^{\infty}]\arrow H^3(X_{\L},\Q_l/\Z_l(2))$$
is injective,
with image $N^1H^3(X_{\L},\Q_l/\Z_l(2))$
\cite[section 18.4]{MS}. So the previous
paragraph implies that $P_*(CH^2(A_{\L})[l^{\infty}])$ is killed by
$l^a$.

Let $E$ be the function field of the moduli space $M(N)$ of curves
of genus 3 with level $N$ structure, and let $C$ be the universal
curve over $C$. Then $E$ is a quadratic extension of $L$.
Let $y$ be the Ceresa cycle in $CH^2(A_{E_1})$ associated
to a finite extension $E_1$ of $E$ and an $E_1$-point of $C$,
as in section \ref{moduli}. We are primarily
interested in the image of $y$ in $CH^2(A_{\L})/l^m$ for natural numbers
$m$, which is independent of the choice of $E_1$ and $p$, but which
depends up to sign on the choice of isomorphism $J(C)\cong A_E$.

Let $z=2y$. Then
$z$ is a codimension-2 cycle on $A_{\L}$ which is homologically
trivial, meaning that $z$ maps to zero in $H^4(A_{\L},\Z_l(2))$.
There is an $l$-adic Abel-Jacobi map for homologically trivial
cycles, taking values in continuous
Galois cohomology \cite[section 1]{BST}:
$$CH^2_{\hom}(A_{E_1})\arrow H^1(E_1,H^3(A_{\L},\Z_l(2))).$$
Following Jannsen, continuous cohomology means
the derived functors of the functor $(M_n)\mapsto
\invlim_n (M_n)^G$ on inverse systems \cite{Jannsen}.

We now use that the field $L$ is finitely generated over $\Q$.
The following result is modeled on Bloch and Esnault
\cite[Proof of Proposition 4.1]{BE}.

\begin{lemma}
\label{invariants}
The natural map
$$H^1(L,P_*H^3(A_{\L},\Z_l(2)))\arrow 
H^1(L',P_*H^3(A_{\L},\Z_l(2)))^{\Gal(L'/L)}$$
is an isomorphism for all finite Galois extensions $L'$ of $L$.
\end{lemma}

\begin{proof}
Let $M=P_*H^3(A_{\L},\Z_l(2))$, and let $G=\Gal(\overline{L}/L)$.
Then $M$
is a finitely generated free $\Z_l$-module on which
$G$ acts with nonzero weight $m$ (namely, $m=-1$).
(That is, let $Y$ be a scheme $Y$ of finite type over $\Z$ with fraction
field $L$ (in the case at hand, $Y$ is the moduli space
$X(N)$). To say that $M$ has weight $M$ means
that the eigenvalues of Frobenius on $M\otimes\Q_l$
at all closed points $y$ of $Y$ in some nonempty open subset are algebraic
numbers, with all archimedean absolute values equal to $q^m$, where
$q$ is the order of the residue field at $y$. To prove that,
it suffices to take an
open subset of $Y$ where $A$ has good reduction, and then apply
Deligne's theorem (the Weil conjecture) \cite{Deligne}. Since $A$
is an abelian variety,
we could also reduce to the Weil conjecture for $H^1$,
proved by Weil.)

Let $M_n=M/l^n$ for any natural number $n$.
Write $H^i(G,M)$ for continuous cohomology as defined above.
Since the groups
$M_n$ are finite, the natural map $H^i(G,M)\arrow \invlim_n H^i(G,M_n)$
is an isomorphism for all $i$ \cite[equation 2.1]{Jannsen}.
We want to show that for any open normal subgroup $H$ of $G$,
the natural map
$$H^1(G,M)\arrow H^1(H,M)^{G/H}$$
is an isomorphism.

The Hochschild-Serre spectral sequence gives an exact sequence,
for each $n$:
$$\xymatrix@1{
0\ar[r] & H^1(G/H,M_n^H)\ar[r]& H^1(G,M_n)\ar[r]^-{\alpha_n} &
 H^1(H,M_n)^{G/H} \ar[r] & H^2(G/H,M_n^H).
}$$
The groups on the left are finite, and so they satisfy the Mittag-Leffler
condition as $n$ varies. This implies the exact sequences:
$$0\arrow \invlim_n H^1(G/H,M_n^H) \arrow \invlim_n H^1(G,M_n)
\arrow \invlim_n \im(\alpha_n)\arrow 0$$
and 
$$0\arrow \invlim_n \im(\alpha_n)\arrow \invlim_n H^1(H,M_n)^{G/H}
\arrow \invlim_n H^2(G/H,M_n^H).$$
The groups $M_n^H$ are finite, and so the inverse system $M_n^H$
satisfies Mittag-Leffler. That implies that the continuous
cohomology $H^i(G/H,\invlim_n M_n^H)$ is computed by the complex
of continuous cochains with coefficients in $\invlim_n M_n^H$
\cite[Theorem 2.2]{Jannsen}. But $\invlim M_n^H=(\invlim M_n)^H=0$
because $M$ has nonzero weight as an $H$-module and is torsion free.
So $H^i(G/H, \invlim_n M_n^H)=0$ for all $i$. By the exact sequences
above, the map $H^1(G,M)\arrow H^1(H,M)^{G/H}$ is an isomorphism.
\end{proof}

By section \ref{moduli}, the Ceresa class $y$ and therefore $z=2y$
are invariant under $\Gal(\L/E)$ in $CH^2(A_{\L})/l^m$,
for all natural numbers $m$. By Lemma \ref{invariants},
it follows that $z$ has a well-defined class
in $H^1(E,H^3(A_{\L},\Z_l(2)))$.

Next, we show that $P_*z$ has nonzero image in $H^1(E,H^3(A_{\L},\Q_l(2)))$,
which is defined to mean the continuous cohomology group above
tensored with $\Q_l$ \cite[Definition 5.13]{Jannsen}. This follows
from Hain's proof of Ceresa's theorem. Let $F$ be the direct limit
of the function fields of the moduli spaces $M(N')$ over all positive
integers $N'$. Then $\Gal(F/E)$ is a completion of the congruence subgroup
$\Gamma(N)$ in
$Sp(6,\Z)$. It suffices to show that $P_*z$ in
$H^1(E,PH^3(A_{\L},\Q_l(2)))$ has nonzero restriction to
$H^1(F,PH^3(A_{\L},\Q_l(2)))$. 

The action of the Galois group of $E$
on the cohomology of $A_{\L}$ factors through $\Gal(F/E)$,
and so we are just claiming that $P_*z$ determines a nonzero
homomorphism $\Gal(\L/F)\arrow PH^3(A_{\L},\Q_l(2))$. Here
$\Gal(\L/E\QQ)$ maps onto a completion of the {\it Torelli group},
the kernel of the homomorphism from the genus 3 mapping class group
to $Sp(6,\Z)$. By working over $\C$, it suffices to show that
the Ceresa class determines a nonzero homomorphism from
the Torelli group to $PH^3(A_{\C},\Q)$. (Here the prime
number $l$ is irrelevant.)
This is exactly what Hain's computation of the normal
function of the Ceresa cycle shows \cite[proof of Theorem 8.2]{Hain}.
(In fact, the Ceresa cycle gives an isomorphism from the abelianized
Torelli group tensor $\Q$ to $PH^3(A_{\C},\Q)$. Johnson had earlier
shown that these two groups are isomorphic.)

By the properties
of the correspondence $P$, $P_*z$ takes values
in $H^1(E,P_*H^3(A_{\L},\Q_l(2)))$; so $P_*z$ is nonzero
in $H^1(E,P_*H^3(A_{\L},\Z_l(2)))$. By definition of this
continuous cohomology group, we have an exact sequence \cite[3.1]{Jannsen}
$$0\arrow \invlim_r\nolimits^1 H^0(E,P_*H^3(A_{\L},\Z/l^r(2)))
\arrow H^1(E,P_*H^3(A_{\L},\Z_l(2)))\arrow \invlim_r H^1(E,
P_*H^3(A_{\L},\Z/l^r(2)))\arrow 0.$$
Since $H^3(A_{\L},\Z/l^r(2))$ is finite for each $r$, the $H^0$ groups
on the left are finite,
and so they satisfy the Mittag-Leffler condition as $r$ varies; so the
derived limit $\invlim^1$ is zero. That is,
$$H^1(E,P_*H^3(A_{\L},\Z_l(2)))\cong \invlim_r H^1(E,
P_*H^3(A_{\L},\Z/l^r(2))).$$
It follows that every nonzero element of $H^1(E,P_*H^3(A_{\L},\Z_l(2)))$
is nonzero modulo $l^b$ for some $b\geq 0$. In particular, there is
a $b$ such that
$P_*z$ is nonzero in $H^1(E,P_*H^3(A_{\L},\Z_l(2)))/l^b$.

Assume that there is a cycle $w$ in $CH^2(A_{\L})$ such that
$$l^{a+b}w=z.$$
Since $z$ is homologically trivial and the cohomology of $A_{\L}$
is torsion-free, $w$ is homologically trivial. Let $E'$ be a finite
Galois extension of $E$ such that the cycle $w$ is defined over $E'$,
and consider $w$ as an element of $CH^2_{\hom}(A_{L'})$.
For $\sigma$ in $\Gal(\overline{L}/L)$, we have
\begin{align*}
l^{a+b}P_*(w-\sigma(w))&=P_*(z-\sigma(z))\\
&=0.
\end{align*}
Since $P_*(CH^2(A_{\L})[l^{\infty}])$ is killed by
$l^a$, it follows that $l^aP_*(w-\sigma(w))=0$; that is,
$l^aP_*(w)$ is fixed by $\Gal(\L/E)$. By Lemma \ref{invariants},
it follows that $l^aP_*(w)$ can be viewed as an element $u$
of $H^1(E,P_*H^3(A_{\L},\Z_l(2)))$, and we have
$$l^bu=P_*z$$
in that group. This contradicts
that $P_*z$ is nonzero in $H^1(E,P_*H^3(A_{\L},\Z_l(2)))/l^b$.
Thus there is no element $w$ as above. In other words,
$$z\neq 0 \in CH^2(A_{\L})/l^{a+b}.$$
Since $z$ is 2 times the Ceresa cycle $y$,
Lemma \ref{nonzero} is proved.
\end{proof}

\section{Isogenies}

\begin{theorem}
\label{main}
Let $A$ be a very general principally polarized abelian
3-fold over $\C$. Then $CH^2(A)/l$ is infinite for every
prime number $l$.
\end{theorem}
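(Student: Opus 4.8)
Here is how I would approach the proof.

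The plan is to reduce, via the theorem of Lecomte and Suslin recalled in Section~\ref{moduli}, to proving that $CH^2(A_{\L})/l$ is infinite, where $L$ is the function field of $X(N)$ for a prime $N\geq 3$ different from $l$ and $\L$ is an algebraic closure; the infinitude will come from pulling the Ceresa cycle back along infinitely many isogenies of degree prime to $l$. First I would fix the data of Section~\ref{moduli}: the field $L$, the universal principally polarized abelian $3$-fold $A$ over $L$, and, using that the Torelli map $M(N)\to X(N)$ is dominant, an identification of $A_{\L}$ with the Jacobian of the universal genus~$3$ curve; thus $A_{\L}$ carries a Ceresa cycle $y$, well defined up to sign and, modulo any power of $l$, independent of the auxiliary choices.

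Next I would construct the isogenies. For a positive integer $n$ prime to $l$ and a maximal isotropic subgroup $G\subset A_{\L}[n]$ for the Weil pairing, the quotient $A_{\L}/G$ is again a principally polarized abelian $3$-fold, isogenous to $A_{\L}$ by the prime-to-$l$ isogeny $q_G\colon A_{\L}\to A_{\L}/G$. Since the prime-to-$l$ Hecke orbit of $A_{\L}$ is dense in the moduli space while the locus of Jacobians has full dimension there, infinitely many such quotients are themselves Jacobians; denote them $B_1,B_2,\dots$, with $B_i\cong J(C_i)$ for a genus~$3$ curve $C_i$ and $\phi_i\colon A_{\L}\to B_i$ the corresponding isogeny, each defined over a finite extension $E_i$ of $E$, with the degrees $[E_i:E]$ unbounded. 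Let $y_i$ be the Ceresa cycle of $B_i$ and set $z_i=2\,\phi_i^*(y_i)\in CH^2(A_{\L})$. Since $\deg\phi_i$ is prime to $l$ and $\phi_{i*}\phi_i^*=\deg(\phi_i)$, the map $\phi_i^*$ is injective on $CH^2(-)/l^m$ for every $m$; and each $B_i$, being isogenous to $A_{\L}$, is again very general and has good ordinary reduction at a suitable place of a finitely generated field of definition, so the method of Lemma~\ref{nonzero}, which applies more generally to abelian $3$-folds over characteristic-zero fields with good ordinary reduction, gives a $c>0$, uniform in $i$, with $2y_i\neq 0$ in $CH^2(B_i)/l^{c}$, hence $z_i\neq 0$ in $CH^2(A_{\L})/l^{c}$.

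The crux is the following. Since Chow groups commute with filtered colimits of fields, $CH^2(A_{\L})/l$ is the colimit of the groups $CH^2(A_{E'})/l$ over finite extensions $E'$ of $E$; if it were finite it would be a quotient of $CH^2(A_{E_0'})/l$ for one such $E_0'$, so every $z_i$ would agree modulo $l$ with a class defined over $E_0'$. I would rule this out for some $i$ with $E_i\not\subseteq E_0'$ exactly as in the endgame of the proof of Lemma~\ref{nonzero}: if $z_i\equiv w\pmod{l}$ with $w\in CH^2(A_{E_0'})$ (which we may take homologically trivial, the cohomology of $A_{\L}$ being torsion free), then composing with the Lefschetz correspondence $P$ (which commutes with $\phi_i^*$, since $P$ is a polynomial in divisor classes and $\phi_i^*$ is an isomorphism on $\Z_l$-cohomology) and passing to continuous cohomology would, via Lemma~\ref{invariants}, place $P_*z_i$ up to a bounded power of $l$ in $H^1(E_0',P_*H^3(A_{\L},\Z_l(2)))$; but Hain's computation of the Ceresa normal function of $B_i$, transported across $\phi_i$, shows that $P_*z_i$ has nonzero image in $H^1(F,P_*H^3(A_{\L},\Q_l(2)))$, and that this image is genuinely new over $E_0'$ because the Galois conjugates of the isogeny $\phi_i$ attach to $A_{\L}$ the Ceresa classes of distinct curves, which are distinguished modulo $l$ by their primitive Abel--Jacobi parts. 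Hence $CH^2(A_{\L})/l$ is infinite.

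I expect this last step to be the main obstacle: one must make precise, uniformly in the family, why the Ceresa data carried into $CH^2(A_{\L})$ by a prime-to-$l$ isogeny cannot be captured modulo $l$ over a smaller field, which demands a careful comparison of Galois cohomology over the growing fields $E_i$ together with Hain's nonvanishing. A secondary point is the construction of the isogenies, which rests on density of (prime-to-$l$) Hecke orbits and on the full-dimensionality of the Jacobian locus in the moduli space of principally polarized abelian $3$-folds. Granting the infinitude of $CH^2(A_{\L})/l$, the theorem of Lecomte and Suslin yields the statement for a very general complex principally polarized abelian $3$-fold, namely Theorem~\ref{main}.
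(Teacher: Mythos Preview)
Your overall strategy---reduce via Lecomte--Suslin to the generic abelian $3$-fold over $\L$, then pull back Ceresa cycles along infinitely many prime-to-$l$ isogenies---matches the paper, and your use of Lemma~\ref{nonzero} to get a single $c$ with $2y\neq 0$ in $CH^2(A_{\L})/l^c$ is exactly right. (Incidentally, you need not worry about ``infinitely many quotients being Jacobians'': every such quotient is abstractly isomorphic to $A_{\L}$, so they all are.)

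The gap is in your distinguishing step. Your proposed mechanism---pass to continuous Galois cohomology via the Abel--Jacobi map and argue that $P_*z_i$ is ``genuinely new over $E_0'$'' because Galois conjugates of $\phi_i$ give Ceresa classes of distinct curves ``distinguished modulo $l$ by their primitive Abel--Jacobi parts''---is not a proof; it is a restatement of what must be shown. You have not explained why distinct curves yield distinct Abel--Jacobi images modulo $l$, nor how a congruence $z_i\equiv w\pmod l$ in $CH^2$ would force their $\Z_l$-valued Abel--Jacobi classes to agree in $H^1(E_0',\ldots)$; the Abel--Jacobi map is defined on $CH^2_{\hom}$ with $\Z_l$-coefficients, and a mod-$l$ congruence of cycles does not directly control it. The machinery of Lemma~\ref{invariants} was used inside Lemma~\ref{nonzero} to prove non-divisibility of a single class, not to separate an infinite family.

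The paper's separating argument is different and much simpler. It never goes back to Abel--Jacobi. Instead it observes that the ambiguity $y\mapsto\pm y$ in the Ceresa class reflects the ramified double cover $M(N)\to X(N)$, so $\Gal(\L/F)$ acts on each pulled-back Ceresa class $g_i\cdot y$ through the sign character $\chi_i\colon\Gal(\L/F)\to\{\pm1\}$ of the $g_i$-translate of that double cover. Nori's lemma---that the stabilizer in $Sp(6,\R)$ of the hyperelliptic divisor $D$ in Siegel space is exactly $Sp(6,\Z)$---shows that distinct cosets $g_i\in Sp(6,\Q)/Sp(6,\Z)$ give distinct branch divisors $g_iD$, hence distinct characters $\chi_i$. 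Since Lemma~\ref{nonzero} gives $2y\neq 0$, one has $y\neq -y$ in $CH^2(A_{\L})/l^c$, so classes carrying different sign characters are literally different elements of $CH^2(A_{\L})/l^c$. This is the missing idea: separation is achieved by the Galois \emph{action on $CH^2/l^c$ itself}, via the quadratic characters coming from the Torelli double cover, not by comparing Abel--Jacobi images over growing fields.
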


\begin{proof}
Fix a prime number $N$ at least 3 and different from $l$.
As discussed in section \ref{moduli}, it suffices to show
that $CH^2(A_{\L})/l$ is infinite, where $L$ is the function
field of the moduli space $X(N)$ of principally polarized
abelian 3-folds with level $N$ structure.

We will imitate the strategy Nori used to show that the Griffiths group
tensor $\Q$ has infinite rank for a very general principally
polarized abelian 3-fold $A$ \cite{Nori}. Rosenschon and Srinivas
extended Nori's argument to show that $CH^2(A_{\L})/l$ is infinite
for almost all primes $l$ \cite{RS}.

Namely, $A_{\L}$ is the Jacobian of a curve, and so we have
a Ceresa cycle $y$ on $A_{\L}$, well-defined up to sign
in $CH^2(A_{\L})/l^m$ for any $m$, as discussed in section
\ref{moduli}. By Lemma \ref{nonzero}, there is a positive integer
$c$ such that $z:=2y$ is nonzero in $CH^2(A_{\L})/l^c$.

The plan is to consider infinitely many isogenies from $A$
to other principally
polarized abelian 3-folds. Pulling the Ceresa cycles back
by these isogenies gives
infinitely many nonzero elements of $CH^2(A_{\L})/l^c$. We argue that
these elements of $CH^2(A_{\L})/l^c$
are all different because they all have different actions of the
Galois group $\Gal(\L/L)$. Thus $CH^2(A_{\L})/l^c$ is infinite, and it follows
that $CH^2(A_{\L})/l$ is infinite.

\begin{lemma}
\label{isogeny}
Let $f\colon A\arrow B$ be an isogeny of principally polarized abelian
varieties over an algebraically closed
field $k$. If $f$ has degree prime to $l$, then the pullback
$f^*\colon CH^*(B)/l^c\arrow CH^*(A)/l^c$ is an isomorphism.
\end{lemma}

\begin{proof}
$f_*f^*$ is multiplication by $\deg(f)$, and so $f^*$
is split injective on $CH^*(A)/l^c$. The composition $f_*f^*$
is the sum of the translates by elements of the finite group
$\ker(f)$. These translates
act on Chow groups
as the identity modulo algebraic equivalence. Since $k$
is algebraically closed, the group of cycles algebraically
equivalent to zero is divisible, and so $f_*f^*$ acts as multiplication
by $\deg(g)$ on $CH^*(B)/l^c$. Thus $f^*$ is an isomorphism
on Chow groups modulo $l^c$.
\end{proof}

The abelian 3-fold $A_{\L}$ has many prime-to-$Nl$
isogenies to principally polarized abelian 3-folds.
They are all isomorphic to $A_{\L}$
as schemes (not as schemes over $\L$). By Lemma \ref{isogeny},
the pullback of 2 times the Ceresa cycle $y$ under each of these
isogenies is nonzero in $CH^2(A_{\L})/l^c$. We conclude that
all the pullbacks of the Ceresa
cycle $y$ are not killed by 2 in $CH^2(A_{\L})/l^c$.

It remains to show that for a suitable infinite family
of isogenies, the pullbacks of $y$
are all different in $CH^2(A_{\L})/l^c$. Let $F$ be the direct
limit of the function fields of the moduli spaces $X(M)$
over all positive integers $M$.
Following Nori,
we argue that $\Gal(\L/F)$ acts by different characters
$\Gal(\L/F)\arrow \pm 1$
on all these pullbacks.

Choose a sequence $r_1,r_2,\ldots$ of elements in $Sp(6,\Q)$
which are distinct in the set $Sp(6,\Q)\backslash Sp(6,\Z)$.
We can assume that each $r_i$ is integral (that is, in $Sp(6,\Z_{(p)})$)
at primes $p$ dividing
$Nl$. Just as $Sp(6,\R)$ acts on the Siegel upper half-space,
$Sp(6,\Q)$ acts on the inverse limit of the moduli spaces
$X(M)$.
In particular, $Sp(6,\Q)$ acts by automorphisms on the direct
limit $F$ of the function fields of $X(M)$.

The center $\{\pm 1\}$ of $Sp(6,\Q)$ acts trivially on $F$,
and so we can also think of this as an action $\rho_1$ of $GSp(6,\Q)$
on $F$, with the center $\Q^*$ acting trivially.
Morover, for any element $g\in M_6(\Z)\cap GSp(6,\Q)$,
there are positive integers $a$ and $M$ with a commutative diagram
$$\xymatrix@C-10pt@R-10pt{
A(Ma) \ar[r]\ar[d] & A(M)\ar[d] \\
X(Ma) \ar[r] & X(M),
}$$
where the top map is an isogeny on the fibers. This induces
a commutative diagram
$$\xymatrix@C-10pt@R-10pt{
A_F \ar[r]^{\rho_2(g)}\ar[d] & A_F\ar[d] \\
\Spec(F) \ar[r]^{\rho_1(g)} & \Spec(F).
}$$

For $N\geq 3$, the map $M(N)\arrow X(N)$ has degree 2 and is ramified
over the closure of the image of the divisor
of hyperelliptic curves in $M(N)$. Let $D$ be the corresponding
divisor in the Siegel space $H$. (The level structure
is irrelevant to the definition of $D$; in other words, $D$
is the inverse image of a divisor in the coarse moduli space $X(1)$
of principally polarized abelian 3-folds.)
We use the following observation
by Nori \cite[Lemma]{Nori}:

\begin{lemma}
The subgroup of $Sp(6,\R)$ that maps $D\subset H$ into itself
is equal to $Sp(6,\Z)$.
\end{lemma}

\begin{proof}
The subgroup $K$ of $Sp(6,\R)$ that maps $D$ into itself
is a closed Lie subgroup of $Sp(6,\R)$. The Lie algebra of $K$
is stable under the adjoint action of $K$, which contains
$Sp(6,\Z)$, and that is Zariski dense in $Sp(6,\R)$. So this Lie algebra
is zero or all of $\mathfrak{sp}(6,\R)$. In the latter case,
$K$ is equal to $Sp(6,\R)$, which is false since $D$ is not all of $H$.
So $K$ is discrete. Since $Sp(6,\Z)$ is a maximal discrete
subgroup of $Sp(6,\R)$ \cite[Theorem 7]{Borel},
$K$ is equal to $Sp(6,\Z)$.
\end{proof}

It follows that for any sequence
of elements $g_1,g_2,\ldots$
of $Sp(6,\Q)$ which are distinct in the set $Sp(6,\Q)/Sp(6,\Z)$,
the divisors $g_iD$ in the Siegel space are different. Therefore,
the ramified
double covering of Siegel space pulled back from $M(3)\arrow X(3)$
gives infinitely many non-isomorphic ramified coverings by the 
action of $g_1,g_2,\ldots$. Each of these coverings is pulled
back from a ramified double covering of some finite level $X(N)$. 

For each $i$, $g_i$ of the Ceresa cycle $y$ in $CH^2(A_{\FF})/l^c$
is nonzero, and $y\neq -y$ (since we showed that $2y\neq 0$).
$\Gal(\FF/F)$ acts on that class by
the character $\Gal(\FF/F)\arrow \pm 1$ associated to the translate
by $g_i$ of the quadratic extension of $F$ corresponding
to $M(3)\arrow X(3)$. It follows that these infinitely many 
translates of the Ceresa class are different in $CH^2(A_{\FF})/l^c$.
In particular, $CH^2(A_{\FF})/l^c$ is infinite. It follows
that $CH^2(A_{\FF})/l$ is infinite.
\end{proof}

% Omit these bibliography lines if there's no bibliography.

\small \sc UCLA Mathematics Department, Box 951555,
Los Angeles, CA 90095-1555

totaro@math.ucla.edu

\begin{thebibliography}{99}
\bibitem{BE} S.~Bloch and H.~Esnault. The coniveau filtration
and non-divisibility for algebraic cycles.
{\it Math.\ Ann.\ }{\bf 304 }(1996), 303--314.

\bibitem{Blochbook} S.~Bloch. {\it Lectures on algebraic
cycles. }2nd ed., Cambridge (2010).

\bibitem{Borel} A.~Borel. Density and maximality
of arithmetic subgroups. {\it J.\ Reine Angew.\ Math.\ }{\bf 224 }(1966),
78--89.

\bibitem{BST} J.~Buhler, C.~Schoen, and J.~Top. Cycles,
$L$-functions, and products of elliptic curves.
{\it J.\ Reine Angew.\ Math.\ }{\bf 492 }(1997), 93--133.

\bibitem{Ceresa} G.~Ceresa. $C$ is not algebraically equivalent
to $C^{-}$ in its Jacobian. {\it Ann.\ Math.\ }{\bf 117 }(1983),
285--291.

\bibitem{Deligne} P.~Deligne. La conjecture de Weil. I.
{\it Publ.\ Math.\ IHES }{\bf 43 }(1974), 273--307.

\bibitem{GH} P.~Griffiths and J.~Harris. {\it Principles
of algebraic geometry. }Wiley (1978).

\bibitem{Hain} R.~Hain. Torelli groups and geometry
of moduli spaces of curves. {\it Current topics in complex
algebraic geometry }(Berkeley, CA, 1992/93), 97--143.
MSRI Publications 28, Cambridge (1995).

\bibitem{Illusie} L.~Illusie. Complexe de de Rham-Witt
et cohomologie cristalline. {\it Ann.\ Sci.\ \'Ecole Norm.\
Sup.\ }{\bf 12 }(1979), 501--661.

\bibitem{Jannsen} U.~Jannsen. Continuous \'etale cohomology.
{\it Math.\ Ann.\ }{\bf 280 }(1988), 207--245. 

\bibitem{Kleiman} S.~Kleiman. Algebraic cycles and the Weil conjectures.
{\it Dix expos\'es sur la cohomologie des sch\'emas, }359--386.
North-Holland (1968).

\bibitem{Lecomte} F.~Lecomte. Rigidit\'e des groupes de Chow.
{\it Duke Math.\ J.\ }{\bf 53 }(1986), 405--426.

\bibitem{MS} A.~Merkur'ev and A.~Suslin. $K$-cohomology
of Severi-Brauer varieties and the norm residue homomorphism.
{\it Izv.\ Akad.\ Nauk SSSR Ser.\ Mat.\ }{\bf 46 }(1982),
1011--1046, 1135--1136. Eng.\ trans.,
{\it Math.\ USSR Izv.\ }{\bf 21 }(1983), 307--340.

\bibitem{Milne} J.~Milne. Lefschetz classes on abelian
varieties. {\it Duke Math.\ J.\ }{\bf 96 }(1999), 639--675.

\bibitem{Nori} M.~Nori. Cycles on the generic abelian threefold. 
{\it Proc.\ Indian Acad.\ Sci.\ Math.\ Sci.\ }{\bf 99 }(1989),
191--196.

\bibitem{Parimala} R.~Parimala. Witt groups of affine 3-folds.
{\it Duke Math.\ J.\ }{\bf 57 }(1988), 947--954.

\bibitem{RS} A.~Rosenschon and V.~Srinivas.
The Griffiths group of the generic abelian 3-fold.
{\it Cycles, motives and Shimura varieties}, 449--467.
Tata Inst.\ Fund.\ Res., Mumbai, 2010.

\bibitem{Schoenladic} C.~Schoen. On the image of the $l$-adic
Abel-Jacobi map for a variety over the algebraic closure
of a finite field. {\it J.\ Amer.\ Math.\ Soc.\ }{\bf 12 }(1999),
795--838.

\bibitem{Schoenproduct} C.~Schoen.
On certain exterior product maps of Chow groups.
{\it Math.\ Res.\ Lett.\ }{\bf 7 }(2000), 177--194.

\bibitem{Schoen} C.~Schoen. Complex varieties for which the Chow group
mod $n$ is not finite. {\it J.\ Alg.\ Geom. }{\bf 11 }(2002),
41--100.

\bibitem{SuslinICM} A.~Suslin. Algebraic $K$-theory of fields.
{\it Proceedings of the International Congress of Mathematicians }(Berkeley,
1986), v.~1, 222--244. Amer.\ Math.\ Soc.\ (1987).

\bibitem{TotaroWitt} B.~Totaro. Non-injectivity of the map
from the Witt group of a variety to the Witt group
of its function field. {\it J.\ Inst.\ Math.\ Jussieu }{\bf 2 }(2003),
483--493.

\end{thebibliography}
\end{document}